\newtheorem{dummy}{dummy}[section]
\newtheorem{lemma}[dummy]{Lemma}
\newtheorem{theorem}[dummy]{Theorem}
\newtheorem{corollary}[dummy]{Corollary}
\newtheorem{proposition}[dummy]{Proposition}
\theoremstyle{definition}
\newtheorem{definition}[dummy]{Definition}
\newtheorem{remark}[dummy]{Remark}
\newcommand{\sfR}{\mathsf{R}}
\newcommand{\bP}{\mathbb{P}}
\newcommand{\bR}{\mathbb{R}}
\newcommand{\bZ}{\mathbb{Z}}
\newcommand{\cD}{\mathcal{D}}
\newcommand{\cE}{\mathcal{E}}
\newcommand{\cH}{\mathcal{H}}
\newcommand{\cL}{\mathcal{L}}
\newcommand{\cP}{\mathcal{P}}
\newcommand{\cQ}{\mathcal{Q}}
\newcommand{\cO}{\mathcal{O}}
\newcommand{\Hom}{\mathrm{Hom}}
\newcommand{\dghom}{\mathit{hom}}			
\newcommand{\uhom}{\underline{\mathit{hom}}}
\newcommand{\LS}{ {\Lambda_\Sigma} }
\newcommand{\tLS}{\tilde{\Lambda}_\Sigma}
\newcommand{\Spec}{\mathrm{Spec}}
\newcommand{\Ext}{\mathrm{Ext}}
\newcommand{\Sh}{\mathit{Sh}}		
\newcommand{\Tr}{\mathit{Tr}}			
\newcommand{\Perf}{\cP\mathrm{erf}}
\renewcommand{\SS}{\mathit{SS}}		
\newcommand{\ltr}{\langle \Theta \rangle}
\newcommand{\ltrp}{\langle\Theta'\rangle}
\renewcommand{\lor}{\langle \oTheta \rangle}
\newcommand{\lorp}{\langle \oTheta' \rangle}
\newcommand{\CSi}{\mathbf{C}(\Sigma)}
\newcommand{\okappa}{\overline{\kappa}}
\newcommand{\forg}{\mathrm{forg}}
\newcommand{\oTheta}{\overline{\Theta}}
\newcommand{\ori}{\mathfrak{or}}
\newcommand{\bGamma}{\mathbf{\Gamma}}
\begin{document}
\title[The nonequivariant coherent-constructible correspondence]
{Remarks on the nonequivariant coherent-constructible correspondence for toric varieties}

\author{David Treumann}
\address{David Treumann, Department of Mathematics, Northwestern University,
2033 Sheridan Road, Evanston, IL  60208}
\email{treumann@math.northwestern.edu}

\maketitle

\section{Introduction}

The following is a theorem announced by Bondal \cite{bondal}:

\begin{theorem}
\label{thm:one}
Let $X$ be a complete $n$-dimensional toric variety.  There is a full embedding of dg categories
$$\okappa:\Perf(X) \hookrightarrow \Sh(\bR^n/\bZ^n)$$
where
\begin{itemize}
\item $\Perf(X)$ denotes the dg triangulated category of bounded complexes of vector bundles on $X$ (the so-called perfect complexes)
\item $\Sh(\bR^n/\bZ^n)$  denotes the bounded derived dg category of sheaves on $\bR^n/\bZ^n$.
\end{itemize}
\end{theorem}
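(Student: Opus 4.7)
The plan is to deduce Theorem \ref{thm:one} from the \emph{equivariant} coherent-constructible correspondence of Fang--Liu--Treumann--Zaslow. Write $T = \Gm^n$ for the torus acting on $X$, let $M \cong \bZ^n$ be its character lattice, and let $\Perf_T(X)$ be the dg category of $T$-equivariant perfect complexes. The equivariant CCC supplies a fully faithful dg embedding $\kappa : \Perf_T(X) \hookrightarrow \Sh(\bR^n)$ whose essential image lies in the subcategory of constructible sheaves with singular support in $\LS \subset T^* \bR^n$. This functor is $M$-equivariant: $M$ acts on the source by $\cE \mapsto \cE \otimes \cO(\chi)$ and on the target by translation $F \mapsto \tau_\chi^* F$, and $\kappa$ intertwines the two actions.

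Granting this, I would define $\okappa$ by descent along the covering $p : \bR^n \to \bR^n/\bZ^n$. For $\cE \in \Perf(X)$, pick an equivariant lift $\tilde{\cE} \in \Perf_T(X)$ and set $\okappa(\cE) := p_* \kappa(\tilde{\cE})$. Any two lifts differ by $\otimes\, \cO(\chi)$ for some $\chi \in M$; under $\kappa$ this becomes a translation, and translations become canonically trivial after $p_*$, so the construction is independent of the choice of lift (up to canonical isomorphism).

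For full faithfulness I would compare the natural ``Fourier'' decompositions on the two sides. On the coherent side the forgetful functor $\forg : \Perf_T(X) \to \Perf(X)$ satisfies
$$\Hom_{\Perf(X)}(\forg\,\cE,\forg\,\cF) \;=\; \bigoplus_{\chi \in M} \Hom_{\Perf_T(X)}(\cE, \cF \otimes \cO(\chi)).$$
On the constructible side, pushforward along the covering $p$ gives
$$\Hom_{\Sh(\bR^n/\bZ^n)}(p_* F, p_* G) \;=\; \bigoplus_{\chi \in M} \Hom_{\Sh(\bR^n)}(F, \tau_\chi^* G).$$
Since $\kappa$ is fully faithful and $M$-equivariant, identifying these two decompositions term by term yields fullness and faithfulness of $\okappa$.

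The hardest part — and the only place the completeness hypothesis on $X$ really enters — is justifying that these sums converge and match. Completeness forces each equivariant Ext group to be finite-dimensional and all but finitely many of the weight components on the left to vanish; on the A-side one must use the $\LS$-constructibility, together with the combinatorics of $\Sigma$, to show that $p_* \kappa(\tilde{\cE})$ has finite-dimensional stalks and that the Hom pairing truly is the indicated finite direct sum, with no contribution at infinity. A subsidiary issue is that on a singular toric variety not every perfect complex is automatically equivariant, so one would first reduce to a generating set — for example equivariant line bundles or structure sheaves of torus-invariant subvarieties — for which equivariant lifts exist tautologically, and then extend $\okappa$ by exact triangles.
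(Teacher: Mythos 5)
Your approach — descend the equivariant functor $\kappa$ along the covering $p : \bR^n \to \bR^n/\bZ^n$, using the weight decomposition of $\Hom$ on both sides — is the same basic strategy the paper uses, and your ``Fourier'' computation of $\Hom(p_! F, p_! G)$ (replace your $p_*$ by $p_!$; they agree on the compactly supported sheaves that actually arise here, but not in general) is exactly the content of Proposition \ref{prop:33}. However, you have correctly put your finger on the real gap and then left it open. The step ``reduce to a generating set and extend by exact triangles'' is where the proof actually lives, and the generators you propose do not obviously suffice: for a merely complete (possibly singular) $X$, it is not true that $\Perf(X)$ is generated by line bundles or by structure sheaves of invariant subvarieties, nor does every perfect complex admit an equivariant structure, so there is no evident subcategory of $\Perf(X)$ on which your formula $\okappa(\cE)=p_*\kappa(\tilde\cE)$ is tautologically defined and which also generates all of $\Perf(X)$ as a triangulated category.

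The paper resolves this by enlarging the target rather than working directly with $\Perf(X)$. It introduces the small category $\CSi$ (objects the cones of $\Sigma$, with $\Hom_{\CSi}(\sigma,\tau)=(\tau^\vee\cap M)/(\tau^\perp\cap M)$ when $\sigma^\vee\subset\tau^\vee$) and realizes $\CSi_\sfR$ fully faithfully in $\cQ(X)$ via $\sigma\mapsto\oTheta'(\sigma)=\forg(\Theta'(\sigma,\chi))$ and in $\Sh_{qc}(M_\bR/M)$ via $\sigma\mapsto\oTheta(\sigma)=p_!\Theta(\sigma,\chi)$; the span through $\Tr(\CSi_\sfR)$ then yields the quasi-equivalence $\okappa:\lorp\to\lor$ of Theorem \ref{thm:square}, sidestepping any coherence bookkeeping about choices of equivariant lift. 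The objects $\oTheta'(\sigma)$ are not perfect, but they all carry canonical equivariant structures, so the weight-decomposition argument applies to them without difficulty. The missing ingredient in your sketch is then the first proposition of \S\ref{subsec:perfect}: using the \v Cech complex together with the fact that vector bundles on affine toric varieties are stably trivial, every vector bundle on $X$ is quasi-isomorphic to a bounded complex of finite direct sums of the $\oTheta'(\sigma)$, so $\Perf(X)\subset\lorp$, and restricting $\okappa$ to $\Perf(X)$ gives the embedding of Theorem \ref{thm:one}. That \v Cech argument, not a choice of equivariant lifts on $\Perf(X)$ itself, is what makes the reduction to the equivariant statement go through for singular complete $X$.
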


Bondal considered the case where $X$ is smooth, so that $\Perf(X)$ coincides with the usual bounded derived category of coherent sheaves.  A natural problem is to identify, for a given toric variety $X$, those sheaves on $\bR^n/\bZ^n$ of the form $\okappa(\cE)$ for $\cE \in \Perf(X)$.

In \cite{fltz} we considered an equivariant version of theorem \ref{thm:one}: we gave a full embedding $\kappa$ of $\Perf_T(X)$ into $\Sh(\bR^n)$, where $\Perf_T(X)$ denotes the category of $T$-equivariant perfect complexes.  In this setting we were able to identify the image of $\kappa$ as the category of constructible sheaves with compact support whose ``singular support'' (a notion of \cite{KS}) is contained in a certain subset $\LS \subset T^* \bR^n$.  
The set $\LS$ is stable under $\bZ^n$-translation---denote its image in $T^*(\bR^n/\bZ^n)$ by $\tLS$.  It is natural to conjecture that the image of $\okappa$ consists of all sheaves on $T^*(M_\bR/M)$ with singular support in $\tLS$.

The purpose of this paper is to discuss this conjecture.  In section \ref{sec:two}, we do some bookkeeping to show that the techniques of \cite{fltz} suffice to construct $\okappa$, prove Bondal's theorem, compare $\okappa$ with its equivariant analogue $\kappa$, and verify a key functoriality result.  In section \ref{sec:three}, we construct a functor from $\Sh_c(\bR^n/\bZ^n;\tLS)$ to the bounded derived category of coherent sheaves on $X$.  This functor should be quasi-inverse to $\okappa$, and we discuss the obstacles to proving this.  In section \ref{sec:zonotopal}, we give a class of smooth toric varieties for which $\okappa$ can indeed be shown to be an equivalence.

\emph{Acknowledgements:}  Many of the results of this paper were known to Alexei Bondal in or before 2005 \cite{bondal}.  I am very grateful to Bohan Fang, Chiu-Chu Melissa Liu, and Eric Zaslow for a happy collaboration.  

\quad

\noindent
{\bf Notation and conventions.}  We use the notation and conventions of \cite{fltz}.

\section{$\Theta$-sheaves in a nonequivariant setting}
\label{sec:two}

In \cite{fltz}, we introduced objects $\Theta'(\sigma,\chi) \in \cQ_T(X)$ and $\Theta(\sigma,\chi) \in \Sh_c(M_\bR)$.  For $\sigma \subset N_\bR$ a cone and $\chi \subset M_\bR$ an integral coset of $\sigma^\perp$, we set
$$\begin{array}{ccc}
\Theta(\sigma,\chi) & = & ((\chi + \sigma^\vee)^\circ \hookrightarrow M_\bR)_! \ori[\dim(M_\bR)] \\
\Theta'(\sigma,\chi) & = & (X_\sigma \hookrightarrow X)_* \cO(\chi)
\end{array}
$$
The functor $\kappa$ is determined by the formula $\kappa(\Theta'(\sigma,\chi)) = \Theta(\sigma,\chi)$.  In this section we modify the definitions of these ``$\Theta$-sheaves'' for nonequivariant use, and verify that the main results of \cite[Section 3]{fltz} go through.  In fact all of the results in this section are essentially consequences of the equivariant results of loc. cit.

\subsection{The category $\CSi$ and the sheaves $\oTheta$ and $\oTheta'$}

The set of pairs $(\sigma,\chi)$ is denoted by $\bGamma(\Sigma,M)$.  It is endowed with a partial order by
$$(\sigma,\phi) \leq (\tau,\psi) \text{ if } \phi + \sigma^\vee \subset \psi + \tau^\vee$$
This poset carries an action of the group $M$, just by translating the second coordinate.  We can form a kind of quotient, which is a category we call $\CSi$:

\begin{definition}
Let $\Sigma$ be a rational polyhedral fan in $N_\bR$.
\begin{enumerate}

\item
Let $\CSi$ be the following category:
\begin{itemize}
\item The objects of $\CSi$ are the cones in $\Sigma$.
\item The hom sets are given by 
$$\Hom_{\CSi}(\sigma,\tau) = \bigg\{
\begin{array}{ll}
(\tau^\vee \cap M)/(\tau^\perp \cap M) & \text{if $\sigma^\vee \subset \tau^\vee$}\\
\varnothing & \text{otherwise}
\end{array}
$$
\item The composition law is induced by the addition maps
$$(\sigma^\vee \cap M) \times (\tau^\vee \cap M) \to (\upsilon^\vee \cap M)$$
whenever $\sigma^\vee \subset \tau^\vee \subset \upsilon^\vee$. 
\end{itemize}
\item
Let $(\CSi)_\sfR$ denote the $\sfR$-linear category spanned by $\CSi$. 
\end{enumerate}

\end{definition}

Note that we have a functor from the poset $\bGamma(\Sigma,M)$ to the category $\CSi$ defined in the following way:
\begin{itemize}
\item The object $(\sigma,\chi)$ maps to the object $\sigma$
\item Whenever $(\sigma,\phi) \leq (\tau,\psi)$ in the partial order, the unique map from $(\sigma,\phi)$ to $(\tau,\psi)$ gets sent to
the map $\overline{\phi} - \psi \in (\tau^\vee \cap M)/(\tau^\perp \cap M)$ where $\overline{\phi}$ is the $\tau^\perp$-coset spanned by the $\sigma^\perp$-coset $\phi$.
\end{itemize}

Let $p$ denote the map $M_\bR \to M_\bR/M$, and let $\forg$ denote the forgetful functor $\cQ_T(X) \to \cQ(X)$.  For each $\sigma \in \Sigma$, fix a coset $\chi$ of $\sigma^\perp$ and set
$$\begin{array}{ccc}
\oTheta(\sigma) & = & p_! \Theta(\sigma,\chi) \\
\oTheta'(\sigma) & = & \forg (\Theta(\sigma,\chi))
\end{array}
$$
These objects are independent of $\chi$, up to a unique quasi-isomorphism.

\begin{remark}
The sheaves $\oTheta(\sigma) \in \Sh(M_\bR/M)$ have infinite-dimensional fibers, but they are still constructible in the weak sense that they are locally constant on some stratification of $M_\bR/M$.  Let us call sheaves with this property \emph{quasiconstructible}, and denote the full triangulated dg subcategory of $\Sh(M_\bR/M)$ consisting of quasiconstructible sheaves by $\Sh_{qc}(M_\bR/M)$.
\end{remark}

The assignments $\sigma \mapsto \oTheta(\sigma)$ and $\sigma \mapsto \oTheta'(\sigma)$ may be extended to fully faithful dg functors $(\CSi)_{\sfR} \to \Sh_{qc}(M_\bR/M)$ and $(\CSi)_{\sfR} \to \cQ(X)$, as follows.  If $\sigma$ and $\tau$ are cones with $\sigma^\vee \subset \tau^\vee$ and $\chi$ is an element of $\tau^\vee \cap M$, let $f_{\sigma,\tau,\chi}$ denote the map $\oTheta(\sigma) \to \oTheta(\tau)$ obtained by pushing forward the inclusion $\Theta(\sigma,\chi) \to \Theta(\tau,0)$ under $p_!$.  Similarly, let $f'_{\sigma,\tau,\chi}:\oTheta'(\sigma) \to \oTheta'(\tau)$ be the image under $\mathrm{forg}$ of the inclusion $\Theta'(\sigma,\chi) \to \Theta'(\tau,0)$.  

\begin{proposition}
\label{prop:33}
Let $\sigma$ and $\tau$ be cones in $\Sigma$.  Then 
\begin{enumerate}
\item
$$\Ext^i(\oTheta(\sigma),\oTheta(\tau)) \cong \bigg\{
\begin{array}{ll}
\sfR[\tau^\vee \cap M] & \text{if $i = 0$ and $\sigma^\vee \subset \tau^\vee$} \\
0 & \text{otherwise}
\end{array}
$$
\item
$$\Ext^i(\oTheta'(\sigma),\oTheta'(\tau)) \cong \bigg\{
\begin{array}{ll}
\sfR[\tau^\vee \cap M] & \text{if $i = 0$ and $\sigma^\vee \subset \tau^\vee$} \\
0 & \text{otherwise}
\end{array}
$$
\end{enumerate}
\end{proposition}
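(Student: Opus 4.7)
Both parts will be deduced from the corresponding equivariant computations in \cite[Section 3]{fltz} by ``summing over $M$.''  Recall that loc.\ cit.\ gives $\Ext^i(\Theta(\sigma,\phi),\Theta(\tau,\psi))=\sfR$ concentrated in degree $0$ when $(\sigma,\phi)\le(\tau,\psi)$ in $\bGamma(\Sigma,M)$ and zero otherwise, and the same statement for the equivariant $\Theta'$-sheaves.

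For (1), the quotient map $p:M_\bR\to M_\bR/M$ is an \'etale cover, so $p^!=p^*$, and the $p_!\dashv p^!$ adjunction yields
\[
\Ext^i(\oTheta(\sigma),\oTheta(\tau))=\Ext^i(\Theta(\sigma,\chi),p^*p_!\Theta(\tau,\chi')).
\]
A stalkwise computation identifies $p^*p_!F$ with $\bigoplus_{m\in M}T_m^*F$, where $T_m$ denotes translation by $m$; applied to $\Theta(\tau,\chi')$ this becomes $\bigoplus_m \Theta(\tau,\chi'+m)$.  Since $\Theta(\sigma,\chi)$ is a shifted open extension by zero of a constant sheaf, $\Ext$ out of it commutes with this direct sum (a direct sectionwise check suffices), so
\[
\Ext^i(\oTheta(\sigma),\oTheta(\tau))=\bigoplus_{m\in M}\Ext^i(\Theta(\sigma,\chi),\Theta(\tau,\chi'+m)).
\]
By the equivariant result each summand is $\sfR$ in degree $0$ exactly when $(\sigma,\chi)\le(\tau,\chi'+m)$, and otherwise zero.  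If $\sigma^\vee\not\subset\tau^\vee$ the condition fails for every $m$, so the Ext vanishes.  If $\sigma^\vee\subset\tau^\vee$, the partial-order condition unwinds to a half-space inclusion that cuts out a translate of $-\tau^\vee\cap M$ inside $M$, a set in canonical bijection with $\tau^\vee\cap M$; this produces $\sfR[\tau^\vee\cap M]$ in degree zero and zero in all other degrees.

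For (2), the same strategy applies with $p_!$ replaced by the forgetful functor $\forg:\cQ_T(X)\to\cQ(X)$.  The residual $T$-action on $\Hom_{\cQ(X)}(\forg F,\forg G)$ gives an $M$-grading
\[
\Ext^i_{\cQ(X)}(\forg F,\forg G)=\bigoplus_{m\in M}\Ext^i_{\cQ_T(X)}(F,G\otimes\chi_m).
\]
Applied to $F=\Theta'(\sigma,\chi)$ and $G=\Theta'(\tau,\chi')$, and using $\Theta'(\tau,\chi')\otimes\chi_m=\Theta'(\tau,\chi'+m)$, the equivariant Ext calculation of \cite{fltz} produces exactly the same indexing set as in Part (1) and hence the same answer.

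\emph{Main obstacle.}  The essential technical issue in both parts is the commutation of $\Ext^i$ with the infinite direct sum.  In (1) this rests on the compactness of $\Theta(\sigma,\chi)$ in $\Sh_c(M_\bR)$, which follows from its description as $j_!$ of a constant sheaf on an open semialgebraic subset.  In (2) it rests on the local finiteness of the $M$-grading on $\Hom$ between the torus-equivariant coherent sheaves $\Theta'(\sigma,\chi)$, valid because these are pushforwards along closed toric immersions of equivariant line bundles.  Once these formal points are in place, the proposition reduces directly to the equivariant computations of \cite[Section 3]{fltz}.
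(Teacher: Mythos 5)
Your proof follows the same strategy as the paper's: for part (1), apply the $p_!\dashv p^!$ adjunction, identify $p^!\oTheta(\tau)\cong\bigoplus_{\xi\in M}\Theta(\tau,\xi)$ via base change for the covering map $p$, pull the direct sum out of $\dghom$, and quote the equivariant computation from \cite{fltz}; for part (2), decompose $\Ext$ in $\cQ(X)$ into its $M$-graded pieces and again quote the equivariant result. The paper is terser (it does not spell out the reindexing, or the fact that $p^!=p^*$ for the covering map $p$, and it dismisses part (2) with ``similarly''), but the decomposition $p^!\oTheta(\tau)\cong\bigoplus_\xi\Theta(\tau,\xi)$ and the subsequent bookkeeping are identical to what you wrote, so this is essentially the paper's proof with the steps filled in.

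One caveat worth flagging concerns your ``Main obstacle'' paragraph. You justify the commutation of $\Ext$ with the infinite direct sum by asserting that $\Theta(\sigma,\chi)$ is a compact object because it is $j_!$ of a constant sheaf on an open set. That implication does not hold: for a non-relatively-compact open $U\subset M_\bR$, the functor $R\Gamma(U,-)=\dghom(j_!\sfR_U,-)$ does not commute with arbitrary direct sums of sheaves (concretely, a direct sum of extensions-by-zero from nested open cones can have global sections ``accumulating at infinity'' that are not finitely supported). The paper itself silently asserts the same commutation without justification, so your attempt is not worse off than the source; but the compactness argument you offer is not a valid substitute, and if one wanted to make this step rigorous one would have to exploit the specific geometry of the $\Theta$-sheaves (e.g.\ an explicit finite cellular model with controlled interactions between the summands) rather than a general compactness principle. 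The parallel point for part (2) — that the $M$-grading on $\Hom_{\cQ(X)}(\forg F,\forg G)$ gives a direct sum rather than a product — likewise requires a finiteness argument beyond simply noting that the $\Theta'$-sheaves are pushforwards of equivariant line bundles, since those pushforwards are not coherent.
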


When the $\Ext^0$ group is nonvanishing, the identifications with $\sfR[\tau^\vee \cap M]$ are given by $f_{\sigma,\tau,\chi} \mapsto \chi$ and $f'_{\sigma,\tau,\chi} \mapsto \chi$.

\begin{proof}
We have
$$\dghom(\oTheta(\sigma),\oTheta(\tau)) := \dghom(p_!(\Theta(\sigma,\chi)),\oTheta(\tau)) \cong \dghom(\Theta(\sigma,\chi),p^! \oTheta(\tau))$$
The maps $p^! \oTheta(\tau) \to \Theta(\tau,\xi)$, for $\xi \in M$, define a quasi-isomorphism $p^!(\oTheta(\tau)) \cong \bigoplus_{\xi \in M} \Theta(\tau,\xi)$.  Thus we have
$$\dghom(\oTheta(\sigma),\oTheta(\tau)) \cong \bigoplus_{\xi \in M} \dghom(\Theta(\sigma,\chi),\Theta(\tau,\xi))$$
Part (1) now follows from proposition 3.3 of \cite{fltz}; part (2) may be proved similarly.
\end{proof}

\begin{theorem}
\label{thm:square}
Let $X$ be a toric variety with fan $\Sigma$.  Let $\ltr \subset \Sh_c(M_\bR)$ and $\ltrp \subset \cQ_T(X)$ denote the full triangulated dg subcategories generated respectively by the objects $\Theta$ and $\Theta'$, and let $\kappa:\ltrp \stackrel{\sim}{\to} \ltr$ be the quasiequivalence of \cite[Theorem 3.4]{fltz}.  Let $\lor \subset \Sh_{qc}(M_\bR/M)$ denote the full triangulated dg subcategory generated by the objects $\oTheta(\sigma)$, and let $\lorp \subset \cQ(X)$ denote the full triangulated subcategory generated by the objects $\Theta'(\sigma,\chi)$. There exists a quasi-equivalence $\okappa:\ltrp \stackrel{\sim}{\to} \ltr$ that makes the following diagram commute up to isomorphism:
$$\xymatrix{
\ltrp \ar[r]^\kappa \ar[d]_{\mathrm{forg}} & \ltr \ar[d]^{p_!} \\
\lorp \ar[r]_{\overline{\kappa}} & \lor
}$$
\end{theorem}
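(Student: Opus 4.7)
The plan is to construct $\okappa$ by using Proposition \ref{prop:33} to identify both $\lorp$ and $\lor$ with the triangulated hull of a single dg category $(\CSi)_\sfR$. The paper has already supplied fully faithful dg embeddings
$$\Phi:(\CSi)_\sfR \hookrightarrow \cQ(X),\qquad \sigma \mapsto \oTheta'(\sigma),$$
and
$$\Psi:(\CSi)_\sfR \hookrightarrow \Sh_{qc}(M_\bR/M),\qquad \sigma \mapsto \oTheta(\sigma).$$
Nonequivariantly $\Theta'(\sigma,\chi) \cong \oTheta'(\sigma)$ (the twist $\cO(\chi)$ is trivial as an ordinary line bundle), so $\lorp$ is the triangulated hull of the essential image of $\Phi$; the analogous statement for $\Psi$ and $\lor$ is built into the definition of $\lor$. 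I would therefore define $\okappa$ on generators by $\okappa(\oTheta'(\sigma)) := \oTheta(\sigma)$ and extend by dg functoriality, producing the desired quasi-equivalence.

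To check the diagram commutes up to natural isomorphism, I would verify equality of the two composites $\okappa \circ \forg$ and $p_! \circ \kappa$ on the generating objects $\Theta'(\sigma,\chi)$ and on morphisms between them. On objects this is immediate: both composites send $\Theta'(\sigma,\chi)$ to $\oTheta(\sigma)$. On morphisms, the strategy is to argue that each of the three functors $\forg$, $p_!$, $\kappa$ factors through the skeleton $(\CSi)_\sfR$. The vertical functor $\forg$ factors through $\CSi$ by collapsing the $M$-torsor of equivariant twists — this is exactly the functor $\bGamma(\Sigma,M) \to \CSi$ defined just before Proposition \ref{prop:33}. The vertical functor $p_!$ factors through $\CSi$ via the decomposition $p^! \oTheta(\tau) \cong \bigoplus_{\xi \in M} \Theta(\tau,\xi)$ used in the proof of Proposition \ref{prop:33}, which matches the basis $f_{\sigma,\tau,\chi}$ of nonequivariant hom-spaces with individual equivariant morphisms on the other side. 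Finally, $\kappa$ intertwines the poset-indexed $\Theta'$- and $\Theta$-structures by its very construction in \cite[Theorem 3.4]{fltz}. Combining the three observations, both composites coincide as functors from $\bGamma(\Sigma,M)$ into $\lor$; commutativity on generators then propagates to the full triangulated hulls by dg functoriality.

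I expect the main obstacle to be a bookkeeping one: identifying the $M$-graded nonequivariant hom-spaces $\sfR[\tau^\vee \cap M]$ computed in Proposition \ref{prop:33} with the images under $\forg$ and $p_!$ of the $M$-many equivariant hom-spaces, and reconciling sign conventions associated with the shift $\overline{\chi} - \psi$. No new geometric or homological input beyond the equivariant quasi-equivalence $\kappa$ of \cite{fltz} and the hom-computation of Proposition \ref{prop:33} should be required; once $(\CSi)_\sfR$ is installed as a common model for both sides of the square, the theorem becomes essentially formal.
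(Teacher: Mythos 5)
Your proposal is correct and takes essentially the same approach as the paper: the paper likewise installs $\mathit{Tr}(\CSi_\sfR)$ as a common model via Proposition \ref{prop:33}, defines $\okappa$ as the resulting composite, and verifies commutativity by observing that $\forg$, $p_!$, and $\kappa$ all lift to a map $\mathit{Tr}(\bGamma(\Sigma,M)_\sfR) \to \mathit{Tr}(\CSi_\sfR)$ on the generating (pre-triangulated) level.
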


\begin{proof}
By proposition \ref{prop:33}, the functors $\CSi_\sfR \to \Sh_{qc}(M_\bR/M):\sigma \mapsto \oTheta(\sigma)$ and $\CSi_\sfR \to \cQ(X):\sigma \mapsto \oTheta'(\sigma)$ are fully faithful.  Therefore there is a diagram of quasi-equivalences
$$\lorp \stackrel{\sim}{\leftarrow} \mathit{Tr}(\CSi_\sfR) \stackrel{\sim}{\rightarrow} \lor$$
where $\Tr$ denotes the triangulated envelope of $\CSi_\sfR$.  We let $\okappa$ be the composition.  The diagram in the theorem commutes because the following diagram of quasi-equivalences, in which the top row defines $\kappa$ and the bottom row defines $\okappa$, commutes by definition:
$$\xymatrix{
\ltrp \ar[d]_{\mathrm{forg}}  & \mathit{Tr}(\bGamma(\Sigma,M)_\sfR) \ar[l] \ar[r] \ar[d] &  \ltr \ar[d]^{p_!} \\
\lorp  & \mathit{Tr}(\CSi_\sfR) \ar[l] \ar[r] & \lor
}$$

\end{proof}

\subsection{Functoriality}
\label{sec:functoriality}

Recall \cite[3.6]{fltz} that to a fan-preserving map $f:N_1 \to N_2$ between $(N_1,\Sigma_1)$ and $(N_2,\Sigma_2)$, we associate a morphism of toric varieties $u = u_f:X_1 \to X_2$.  We also have the map $v = v_f:M_{2,\bR} \to M_{1,\bR}$, which descends to a map $M_{2,\bR}/M_2 \to M_{1,\bR}/M_1$.  We abuse notation and denote this map by $v$ as well.

\begin{theorem}
Let $f$ be a fan-preserving map from $\Sigma_1 \subset N_{1,\bR}$ to $\Sigma_2 \subset N_{2,\bR}$.  Suppose that $f$ furthermore satisfies the following conditions:
\begin{enumerate}
\item The inverse image of any cone $\sigma_2 \subset \Sigma_2$ is a union of cones in $\Sigma_1$.
\item $f$ is injective.
\end{enumerate}
Let $u$ and $v$ be as above.  Then
\begin{enumerate}
\item 
The pullback $u^*:\cQ(X_2) \to \cQ(X_1)$ takes $\lorp_2$ to $\lorp_1$.
\item  The pushforward $v_!:\Sh(M_{2,\bR}/M_2) \to \Sh(M_{1,\bR}/M_1)$ carries $\lor_2$ to $\lor_1$.

\item  The following square of functors commutes up to natural isomorphism:
$$
\xymatrix{
\lorp_2 \ar[r]^{\okappa_2} \ar[d]_{u^*} & \lor_2 \ar[d]^{v_!} \\
\lorp_1 \ar[r]_{\okappa_1} & \lor_1
}
$$
\end{enumerate}
\end{theorem}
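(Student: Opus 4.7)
The plan is to reduce everything to the equivariant analogue, proved in \cite[3.6]{fltz}, by exploiting the fact that the generators $\oTheta(\sigma)$ and $\oTheta'(\sigma)$ of $\lor$ and $\lorp$ are obtained from equivariant $\Theta$-sheaves by applying $p_!$ and $\forg$ respectively. Throughout, I write $\tilde v = v_f : M_{2,\bR}\to M_{1,\bR}$ for the lift of $v$, and I use the commutative square
\[
\xymatrix{
M_{2,\bR}\ar[r]^{\tilde v}\ar[d]_{p_2} & M_{1,\bR}\ar[d]^{p_1}\\
M_{2,\bR}/M_2\ar[r]_{v} & M_{1,\bR}/M_1
}
\]
which is Cartesian (indeed $\tilde v$ is equivariant for the group homomorphism $M_1\to M_2$ dual to $f$ and the quotient map identifies $M_{i,\bR}/M_i$ with the respective tori).

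First I would check parts (1) and (2) on generators. For (1), observe that the forgetful functor commutes with pullback: $u^*\circ \forg \cong \forg \circ u^*$. Hence for any cone $\sigma_2\in\Sigma_2$ and any coset $\chi_2$,
\[
u^*\oTheta'(\sigma_2) \;=\; u^*\,\forg\bigl(\Theta'(\sigma_2,\chi_2)\bigr) \;\cong\; \forg\bigl(u^*\Theta'(\sigma_2,\chi_2)\bigr).
\]
By the equivariant functoriality result \cite[3.6]{fltz}, $u^*\Theta'(\sigma_2,\chi_2)$ lies in $\ltrp_1$, so its image under $\forg$ lies in $\lorp_1$, as desired. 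For (2), I would invoke proper base change for the Cartesian square above, which gives $v_!\circ (p_2)_! \cong (p_1)_!\circ \tilde v_!$. Then
\[
v_!\,\oTheta(\sigma_2) \;=\; v_!\,(p_2)_!\,\Theta(\sigma_2,\chi_2) \;\cong\; (p_1)_!\,\tilde v_!\,\Theta(\sigma_2,\chi_2),
\]
and $\tilde v_!\,\Theta(\sigma_2,\chi_2)\in\ltr_1$ by the equivariant result, so the right-hand side lies in $\lor_1$.

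For part (3), I would check the natural isomorphism on the generators $\oTheta'(\sigma_2)$. Chaining the two commutative squares from Theorem \ref{thm:square} together with the equivariant commuting square (top row below, from \cite[3.6]{fltz}) gives a diagram of quasi-equivalences
\[
\xymatrix{
\ltrp_2 \ar[r]^{\kappa_2}\ar[d]_{u^*} & \ltr_2 \ar[d]^{\tilde v_!} \\
\ltrp_1 \ar[r]^{\kappa_1}\ar[d]_{\forg} & \ltr_1 \ar[d]^{(p_1)_!} \\
\lorp_1 \ar[r]_{\okappa_1} & \lor_1
}
\]
while on the other side of $\oTheta'(\sigma_2)$ we have
\(
v_!\okappa_2\oTheta'(\sigma_2) \cong v_!(p_2)_!\kappa_2\Theta'(\sigma_2,\chi_2) \cong (p_1)_!\tilde v_!\kappa_2\Theta'(\sigma_2,\chi_2).
\)
The other side is $\okappa_1 u^*\oTheta'(\sigma_2) \cong (p_1)_!\kappa_1\forg\,\forg\bigl(u^*\Theta'(\sigma_2,\chi_2)\bigr) \cong (p_1)_!\kappa_1 u^*\Theta'(\sigma_2,\chi_2)$, and these agree by the equivariant functoriality square. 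Naturality in $\sigma_2$ extends the isomorphism from generators to all of $\lorp_2$.

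The main obstacle is not conceptual but bookkeeping: one must verify that the various natural isomorphisms (the proper base change for $v_!(p_2)_! \simeq (p_1)_!\tilde v_!$, the commutation $u^*\forg \simeq \forg u^*$, and the coherence with the isomorphisms in Theorem \ref{thm:square}) paste together into a single coherent natural isomorphism of dg functors, rather than merely giving isomorphisms on objects. As in Theorem \ref{thm:square}, this can be packaged by showing both functors $\okappa_1 u^*$ and $v_!\okappa_2$ agree with a common functor factoring through $\Tr(\CSi_{1,\sfR})$, so the coherence is reduced to the already-verified equivariant coherence from \cite[3.6]{fltz}.
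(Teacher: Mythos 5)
Your approach is essentially the same as the paper's: reduce to the equivariant functoriality result of \cite[Theorem~3.8]{fltz} by writing $\oTheta(\sigma) = p_!\Theta(\sigma,\chi)$ and $\oTheta'(\sigma) = \forg\,\Theta'(\sigma,\chi)$, commuting $v_!$ past $p_!$ and $u^*$ past $\forg$, and then applying $p_{1!}$ to the equivariant natural isomorphism $\iota$. One small technical correction: the identity $v_!\circ p_{2!} \cong p_{1!}\circ \tilde v_!$ does not require proper base change or Cartesianness of the square (and the square need not be Cartesian, e.g.\ when $\tilde v(M_2)\subsetneq M_1$); it follows from the commutativity $v\circ p_2 = p_1\circ \tilde v$ alone, via functoriality of $!$-pushforward on compositions. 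The paper uses exactly this observation, silently, when it writes $v_!p_{2!}\Theta(\sigma,0)\cong p_{1!}v_!\Theta(\sigma,0)$.
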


\begin{proof}
We will construct a natural transformation $\overline{\iota}:v_! \circ \okappa_2 \to \okappa_1 \circ u^*$, and conclude that $\overline{\iota}$ is a quasi-isomorphism by appealing to the fact that it is a quasi-isomorphism in the equivariant setting.  It suffices to define $\overline{\iota}$ on the generators $\oTheta'(\sigma)$.  We have
natural quasi-isomorphisms
$$
\begin{array}{c}
v_! \circ \okappa_2(\oTheta'(\sigma)) \cong v_! \oTheta(\sigma)[d] = v_! p_{2!} \Theta(\sigma,0)[d] \cong p_{1!} v_! \Theta(\sigma,0)[d]\\
\okappa_1 \circ u^*(\oTheta'(\sigma)) \cong \okappa_1 \circ \mathrm{forg}(u^* \Theta'(\sigma,0)) \cong
p_{1!} \kappa_1 u^* \Theta'(\sigma,0) 
\end{array}$$
We may therefore define $\overline{\iota}$ to be the image under $p_{1!}$ of the equivariant analog $\iota:v_! \circ \kappa_2 \to \kappa_1 \circ u^*$ defined in the proof of \cite[Theorem 3.8]{fltz}.  Since $\iota$ is a quasi-isomorphism, so it $\overline{\iota}$.  This completes the proof.
\end{proof}

As in the equivariant case \cite[Example 3.12]{fltz}, applying the theorem to the diagonal map $\Delta:X \to X \times X$ shows that $\okappa$ induces an equivalence of monoidal categories $h\lor \stackrel{\sim}{\to} h\lorp$, where the tensor structure on $\lor$ is the usual tensor product of quasicoherent sheaves, and the tensor structure on $\lorp$ is given by convolution.

\subsection{Perfect complexes and $\tLS$-sheaves}
\label{subsec:perfect}

By studying the \v Cech complex we can see that the category $\lorp$ contains $\Perf(X)$.

\begin{proposition}
Let $X$ be a toric variety with fan $\Sigma$.  If $\cE$ is a vector bundle on $X$, then $\cE$ is quasi-isomorphic to a bounded complex of quasicoherent sheaves
of the form $\bigoplus_{i=1}^n \oTheta'(\sigma_i)$.
\end{proposition}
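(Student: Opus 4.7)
The plan is to resolve $\cE$ by its \v Cech complex with respect to the standard affine cover of $X$, and to identify each term as a finite direct sum of sheaves $\oTheta'(\tau)$. Let $\sigma_1,\ldots,\sigma_m$ denote the maximal cones of $\Sigma$, write $U_i = X_{\sigma_i}$, and for each nonempty subset $S \subset \{1,\ldots,m\}$ set $\tau_S = \bigcap_{i \in S} \sigma_i$, so that $U_S := \bigcap_{i \in S} U_i = X_{\tau_S}$ is again an affine toric variety. Because $X$ is separated, the augmented \v Cech complex
\[
0 \to \cE \to \bigoplus_{|S|=1} (U_S \hookrightarrow X)_*(\cE|_{U_S}) \to \bigoplus_{|S|=2} (U_S \hookrightarrow X)_*(\cE|_{U_S}) \to \cdots
\]
is exact, so $\cE$ is quasi-isomorphic to the (unaugmented) \v Cech complex, which is bounded of length at most $m$.

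To finish I need to identify each summand $(U_S \hookrightarrow X)_*(\cE|_{U_S})$ as a finite direct sum of $\oTheta'$'s. I would appeal to Gubeladze's theorem, which asserts that every finitely generated projective module over a normal affine semigroup ring is free. Since the coordinate rings of the affine toric varieties $X_{\tau_S}$ are precisely such semigroup rings, $\cE|_{X_{\tau_S}} \cong \cO_{X_{\tau_S}}^{\oplus r_S}$ for some $r_S \geq 0$. Pushing forward along the open inclusion and noting that $\oTheta'(\tau) \cong (X_\tau \hookrightarrow X)_* \cO_{X_\tau}$ (non-equivariantly the choice of coset is immaterial), each term of the \v Cech complex becomes a finite direct sum of sheaves of the form $\oTheta'(\tau_S)$, as required.

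The main nontrivial input is Gubeladze's theorem: without it one only knows that each $\cE|_{X_{\tau_S}}$ is a projective module over the semigroup ring, which is not \emph{a priori} a finite direct sum of $\oTheta'$-type sheaves. A natural alternative would be to work equivariantly first---where vector bundles on affine toric varieties split as direct sums of equivariant line bundles by an essentially elementary argument---and then apply $\mathrm{forg}$, but a general vector bundle $\cE$ need not carry an equivariant structure, so this strategy does not seem to remove the need for Gubeladze.
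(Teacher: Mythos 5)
Your proof is correct, and it follows the same \v Cech-complex strategy as the paper's, but with a different mechanism for trivializing on the charts. The paper resolves the issue with a slightly lighter input: it uses only that vector bundles on affine toric varieties are \emph{stably} trivial, deduces that $\cE \oplus \cO^{\oplus n}$ trivializes on every chart for some fixed $n$ (finitely many charts, so take $n$ large enough, and triviality on a chart restricts to triviality on every intersection contained in it), and then recovers the statement for $\cE$ from the statement for $\cE \oplus \cO^{\oplus n}$ and for $\cO^{\oplus n}$ by a cone argument. The citation of Gubeladze in the paper is parenthetical---``in fact\ldots trivial''---rather than load-bearing. You invoke Gubeladze's full triviality theorem directly, which removes the need for the cone step and makes the identification of each \v Cech term immediate. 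Your version is a bit cleaner at the cost of leaning on the stronger theorem; the paper's version is insulated from Gubeladze at the cost of an extra reduction. One small simplification available to you: it suffices to apply Gubeladze on the maximal charts $U_i$ alone, since a trivialization of $\cE|_{U_i}$ restricts to a trivialization on every $U_S \subset U_i$, so you do not need to invoke it separately on each intersection $U_S$.
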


\begin{proof}
The proposition is clear for trivial vector bundles.  Moreover, if the proposition is true for $\cE \oplus \cO^{\oplus n}$, then it is true for $\cE$, since the latter is the cone on a map
$$\cO^{\oplus n} \to \cE \oplus \cO^{\oplus n}$$
Now recall the fact that on an affine toric variety, every vector bundle is stably trivial.  (In fact, it is known that every vector bundle on an affine toric variety is actually trivial, \cite{gub}).  Thus we may find $n$ such that $\cE \oplus \cO^n$ admits a trivialization over every chart.  It follows that in the \v Cech complex for $\cE \oplus \cO^n$ we may write each term as $\bigoplus_i \oTheta(\sigma_i)$.   This completes the proof.
\end{proof}

In contrast, we do not know if the category $\Sh_c(M_\bR/M;\tLS)$ is contained in $\lor$.  However we can show

\begin{proposition}
The image of $\Perf$ under $\overline{\kappa}$ is contained in $\Sh_c(M_\bR/M;\LS)$
\end{proposition}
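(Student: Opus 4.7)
The plan is to reduce to vector bundles using the triangulated structure of $\Perf(X)$, apply the \v{C}ech-type resolution of the preceding proposition, and then verify the singular support and constructibility conditions separately. Since $\Perf(X)$ is the triangulated envelope of bounded complexes of vector bundles and $\Sh_c(M_\bR/M;\LS)$ is a triangulated subcategory, it suffices to show $\okappa(\cE) \in \Sh_c(M_\bR/M;\LS)$ for a single vector bundle $\cE$. By the preceding proposition, $\cE$ is quasi-isomorphic in $\lorp$ to a bounded complex $C^\bullet$ with $C^i = \bigoplus_j \oTheta'(\sigma_{i,j})$, so $\okappa(\cE)$ is quasi-isomorphic to a bounded complex whose terms are finite direct sums of the sheaves $\oTheta(\sigma)$.

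The singular support bound I would handle term by term. Since $p\colon M_\bR \to M_\bR/M$ is a covering map, the singular support of $\oTheta(\sigma) = p_!\Theta(\sigma,\chi)$ is the image under the induced map $T^*M_\bR \to T^*(M_\bR/M)$ of $\SS(\Theta(\sigma,\chi))$, which is contained in $\LS$ by \cite{fltz}. Thus $\SS(\oTheta(\sigma)) \subset \tLS$, and since the singular support of a bounded complex is contained in the union of the singular supports of its terms, $\SS(\okappa(\cE)) \subset \tLS$ as required.

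The main obstacle is strict constructibility (finite-dimensional stalk cohomology), as opposed to the weaker quasi-constructibility noted in the earlier remark: each $\oTheta(\sigma)$ by itself has infinite-dimensional stalks, so the required finiteness for $\okappa(\cE)$ must emerge from cancellation in the differentials of $\okappa(C^\bullet)$. Here I would use Theorem \ref{thm:square} and argue locally: over any open $U \subset M_\bR/M$ evenly covered by $p$, the pullback $p^{-1}\okappa(\cE)|_U$ can be identified, sheet by sheet, with pieces of the equivariant image $\kappa(\tilde{\cE})$ for a local equivariant lift $\tilde{\cE}$ of the finite-rank bundle $\cE$; since $\kappa$ lands in $\Sh_c(M_\bR;\LS)$ by \cite{fltz}, each such piece is constructible. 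Only finitely many sheets should contribute nontrivially near any given stalk of $\okappa(\cE)|_U$ because $\cE$ has finite rank, producing the needed cancellation, so that $\okappa(\cE)|_U$ is in fact constructible. Patching these local descriptions together globally then yields the claim.
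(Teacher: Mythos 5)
Your singular-support estimate is fine, but the constructibility step is a genuine gap, and it is precisely the step the paper handles by a route you don't take.

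You start from the \v{C}ech-type resolution $\cE \simeq \bigoplus_i \oTheta'(\sigma_i)$, so that $\okappa(\cE)$ is represented by a bounded complex of sheaves $\oTheta(\sigma)$, each of which already has infinite-dimensional stalks. You then need a cancellation argument to recover finite-dimensional stalk cohomology, and the one you sketch does not hold up. ``A local equivariant lift $\tilde\cE$'' of a vector bundle is not a well-defined notion: equivariant structures are global data, and an arbitrary vector bundle on a toric variety need not admit one. And the claim that ``only finitely many sheets should contribute nontrivially near any given stalk of $\okappa(\cE)|_U$ because $\cE$ has finite rank'' is an assertion, not an argument; the terms $\oTheta(\sigma)$ of your complex, pulled back along $p$, decompose as $\bigoplus_{\xi\in M}\Theta(\sigma,\xi)$ with infinitely many nonzero summands near any point, and finite rank of $\cE$ does not by itself force the requisite cancellation in the totalization. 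Nothing in your outline explains why the failure of each $\oTheta(\sigma)$ to be constructible disappears in the total complex.

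The paper avoids the issue entirely by changing the generators. Using the functoriality results of the previous subsection it first reduces to the case of a smooth projective $X$, where $\Perf(X)$ is generated by line bundles. Every line bundle $\cL$ on a toric variety admits a global equivariant structure $\cL'$, so by the commutative square of Theorem \ref{thm:square} one has $\okappa(\cL)\cong p_!\kappa(\cL')$. Now $\kappa(\cL')$ is already known from \cite{fltz} to be a \emph{compactly supported} constructible sheaf with $\SS\subset\LS$. Because $p$ is a covering map and $\kappa(\cL')$ has compact support, $p_!\kappa(\cL')$ has, at each point, a stalk equal to a \emph{finite} direct sum of stalks of $\kappa(\cL')$; constructibility and the singular-support bound $\SS(p_!\kappa(\cL'))\subset p(\LS)=\tLS$ follow immediately. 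The finite-dimensionality you were trying to extract from cancellation is instead obtained for free from compact support upstairs. If you want to salvage your approach you would have to supply the missing reduction to globally equivariantizable generators (line bundles in the smooth projective case), or else prove the cancellation claim directly, which is essentially as hard as the proposition itself.
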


\begin{proof}
The strategy is similar to that of the proof of \cite[Theorem 7.1]{fltz}.  We use the functoriality results of section \ref{sec:functoriality} to reduce to the case where $X$ is smooth and projective.  In that case, $\Perf(X)$ is generated by line bundles.  Every line bundle can be given an equivariant structure, so that from the square of theorem \ref{thm:square}, we have $\overline{\kappa}(\cL) = p_! \kappa(\cL')$ where $\cL'$ is an equivariant line bundle.  We showed in \cite{fltz} that $F = \kappa(\cL')$ has compact support and that $\SS(F)$ belongs to $\LS$.   It follows that $\SS(p_! F) \subset p(\LS) = \tLS$.
\end{proof}

\section{A candidate inverse to $\okappa$}
\label{sec:three}

The results section \ref{subsec:perfect} give us a commutative square:
$$
\xymatrix{
\Perf_T(X) \ar[r]^\kappa \ar[d]_{\forg} & \Sh_{cc}(M_\bR;\LS) \ar[d]^{p_!} \\
\Perf(X) \ar[r]_\okappa & \Sh_c(M_\bR/M;\tLS)
}
$$
We have seen that the rows in this square are fully faithful.  In \cite{fltz} we showed that the top row is a quasi-equivalence of dg categories.  To show that a sheaf $F$ on $M_\bR$ is in the image of $\kappa$, we inducted on, roughly speaking, the number of ``critical values'' that $F$ has with respect to linear functions $M_\bR \to \bR$.  We believe that the bottom row is a quasi-equivalence as well, but there is no simple generalization of the same induction argument: real-valued functions on a torus are more complicated than linear functions on $M_\bR$.

In this section, we see how far some easier arguments can go toward establishing the essential surjectivity of $\okappa$.  We construct a functor $\rho:\Sh(M_\bR/M) \to \cQ(X)$ which when restricted to $\Sh_c(M_\bR/M;\tLS)$ should be a kind of inverse to $\okappa$.  More precisely we show that $\rho \circ \okappa$ takes a perfect complex $\cE$ to the dual perfect complex $\cE^\vee = \uhom(\cE,\cO)$, and we use this to make some statements (corollary \ref{cor:conjecture}) equivalent to the essential surjectivity of $\okappa$.

Before defining $\rho$, we discuss duality.  Write $\cE \mapsto \cE^\vee$ for the natural duality functor (i.e. $\cE^\vee = \uhom(\cE,\cO)$) on $\Perf(X)$ and write $\cD$ for the Verdier dualizing functor on $\Sh_c(M_\bR/M)$.  Note that $\cD$ composed with the map $-1:M_\bR/M \to M_\bR/M$ preserves the subcategory $\Sh_c(M_\bR/M;\LS)$.  The proof of \cite[Theorem 7.4]{fltz} establishes the following

\begin{theorem}
There is a natural isomorphism
$$\okappa(\cE^\vee) \cong -\cD(\okappa(\cE))$$
\end{theorem}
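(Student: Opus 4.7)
The plan is to reduce to the equivariant analog \cite[Theorem 7.4]{fltz}, which in our notation asserts a natural isomorphism $\kappa(\cE'^\vee) \cong -\cD\, \kappa(\cE')$ for $\cE' \in \Perf_T(X)$. Although a general $\cE \in \Perf(X)$ need not admit an equivariant lift, by the first proposition of section \ref{subsec:perfect} any such $\cE$ is quasi-isomorphic to a bounded complex built from the generators $\oTheta'(\sigma)$, each of which does lift, since by definition $\oTheta'(\sigma) = \forg(\Theta'(\sigma,0))$. I would therefore establish the isomorphism on these generators using the equivariant theorem together with a push-forward argument, and then extend by naturality and exactness to all of $\lorp$ and in particular to $\Perf(X)$.

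On a generator, using the commutative square of Theorem \ref{thm:square} and the fact that $\forg$ commutes with $\uhom(-,\cO)$, I would rewrite
$$\okappa(\oTheta'(\sigma)^\vee) \;\cong\; p_!\, \kappa(\Theta'(\sigma,0)^\vee) \;\cong\; p_! \bigl(-\cD\, \kappa(\Theta'(\sigma,0))\bigr),$$
where the last step invokes the equivariant theorem. The inversion map on $M_\bR$ descends through $p$, so the minus sign passes outside $p_!$. Next, since $\kappa(\Theta'(\sigma,0))$ has compact support in $M_\bR$ (shown in \cite{fltz}) and a compact set meets each $M$-orbit in only finitely many points, $p_!$ agrees with $p_*$ on the Verdier dual $\cD\, \kappa(\Theta'(\sigma,0))$. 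Combined with the exchange $\cD \circ p_! \cong p_* \circ \cD$, this gives
$$p_! \cD\, \kappa(\Theta'(\sigma,0)) \;\cong\; p_* \cD\, \kappa(\Theta'(\sigma,0)) \;\cong\; \cD\, p_!\, \kappa(\Theta'(\sigma,0)) \;=\; \cD\, \okappa(\oTheta'(\sigma)),$$
yielding the desired isomorphism on each generator.

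To promote this to a natural isomorphism, the plan is to check compatibility along the morphisms that generate the category. By Proposition \ref{prop:33}, $\Hom(\oTheta'(\sigma), \oTheta'(\tau))$ is spanned by the maps $f'_{\sigma,\tau,\chi}$, and each of these is the image under $\forg$ of an equivariant morphism $\Theta'(\sigma,\chi) \to \Theta'(\tau,0)$, so naturality of the constructed isomorphism reduces directly to the naturality of its equivariant counterpart established in \cite{fltz}. Since $\cE \mapsto \okappa(\cE^\vee)$ and $\cE \mapsto -\cD\, \okappa(\cE)$ are both contravariant exact functors, a natural isomorphism on generators extends uniquely through the triangulated envelope of $\CSi_\sfR$ to one on all of $\lorp$. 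The main point to be vigilant about — the hard part of making the sketch rigorous — is that the $p_! \to p_*$ comparison used above is \emph{natural} in compactly-supported objects and compatible with Verdier's exchange and with the forgetful map, so that the local pieces genuinely assemble into a natural transformation at the dg level; this is a routine but unavoidable instance of the six-functor formalism.
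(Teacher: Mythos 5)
Your plan is to deduce the nonequivariant duality statement from its equivariant counterpart by pushing forward along $p$, and the key technical step is the comparison $p_! \cD \cong \cD p_!$, which you attempt to justify by arguing that $p_!$ agrees with $p_*$ on the relevant objects because they have compact support. This is where the argument has a real gap: you assert that $\kappa(\Theta'(\sigma,0))$ has compact support, but this is false. By definition $\kappa(\Theta'(\sigma,\chi)) = \Theta(\sigma,\chi) = ((\chi+\sigma^\vee)^\circ \hookrightarrow M_\bR)_! \ori[\dim M_\bR]$, whose support is the polyhedral cone $\chi + \sigma^\vee$ — compact only in the degenerate case $\sigma^\vee = \{0\}$. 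Nor does $\supp \Theta(\sigma,\chi)$ map properly to $M_\bR/M$: a cone of positive dimension meets each $M$-coset in infinitely many points. So $p_!$ and $p_*$ genuinely differ on these objects, and the Verdier exchange $\cD p_! \cong p_* \cD$ does not let you conclude $p_! \cD \cong \cD p_!$ on the generators.

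The compact-support fact established in \cite{fltz} (and quoted in Section \ref{subsec:perfect} of this paper) concerns the images $\kappa(\cL')$ of equivariant perfect complexes, not the generators $\Theta(\sigma,\chi)$. If you want to run the ``push forward the equivariant statement'' strategy, you should apply it directly to a perfect complex $\cE$ with an equivariant lift $\cE'$ (for instance a line bundle, after reducing to the smooth projective generated-by-line-bundles case as in the proof that $\overline\kappa(\Perf) \subset \Sh_c(M_\bR/M;\tLS)$); then $\kappa(\cE')$ really is compactly supported, the $p_! \cong p_*$ comparison is legitimate, and the calculation you wrote goes through. The reduction to generators $\oTheta'(\sigma)$ is also awkward for a second reason: these objects are not perfect (they are pushforwards of $\cO$ from open affines, hence quasicoherent but generally not coherent), so $\oTheta'(\sigma)^\vee = \uhom(\oTheta'(\sigma),\cO)$ does not live in $\lorp$, and the two sides of the claimed isomorphism are not manifestly defined at the generator level. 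The paper's own proof sidesteps all of this by simply observing that the argument of \cite[Theorem 7.4]{fltz} carries over verbatim in the nonequivariant setting, rather than deducing the nonequivariant statement from the equivariant one.
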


We will also need of variant of Gordon's lemma.  Let $\sigma^\vee \subset M_\bR$ be a rational polyhedral cone.  For each $x \in M_\bR$, consider the coset $x + M$ of $M \subset M_\bR$, and construct the $\sfR[\sigma^\vee \cap M]$-module whose underlying vector space has a basis given by the points of $(x + M) \cap (\sigma^\vee)^\circ$, and where the $\sfR[\sigma^\vee \cap M]$ action is given by
$$\chi \cdot [x + \xi] = \bigg\{
\begin{array}{ll} [x + \xi + \chi] & \text{if }x + \xi + \chi \in (\sigma^\vee)^\circ \\
0 & \text{otherwise}
\end{array}$$

\begin{lemma}
\label{lem:fingen}
The $\sfR[\sigma^\vee \cap M]$-module just described is finitely generated.
\end{lemma}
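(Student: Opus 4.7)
The plan is to realize the module as the lattice points of a rational polyhedron, and then apply a mild extension of Gordon's lemma. First observe that if $\sigma^\vee$ does not span $M_\bR$ then $(\sigma^\vee)^\circ = \emptyset$ and the module is zero, so I will assume $\sigma^\vee$ is full-dimensional. Under this assumption the ``else $0$'' clause in the action never actually triggers, because $(\sigma^\vee)^\circ + \sigma^\vee \subset (\sigma^\vee)^\circ$ (the interior of a convex cone is absorbing under addition by cone elements). The action is therefore just translation of the basis.

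Writing $\sigma^\vee = \{y \in M_\bR : \langle y, v_i\rangle \geq 0,\ i = 1, \ldots, r\}$ with $v_i \in N$ the primitive ray generators of $\sigma$, the condition $x + \xi \in (\sigma^\vee)^\circ$ for $\xi \in M$ becomes $\langle \xi, v_i\rangle \geq c_i$, where $c_i := \lfloor -\langle x, v_i\rangle \rfloor + 1 \in \bZ$ (using that $\langle \xi, v_i\rangle$ is itself an integer). Hence the basis set is in bijection with $P \cap M$ for the rational polyhedron $P := \{y \in M_\bR : \langle y, v_i\rangle \geq c_i\}$, which has recession cone $\sigma^\vee$. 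It therefore suffices to prove that $P \cap M$ is finitely generated as a $\sigma^\vee \cap M$-module under addition.

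When $\sigma^\vee$ is pointed I would combine Gordon's lemma for cones with the decomposition theorem for polyhedra: write $P = V + \sigma^\vee$ with $V$ a bounded rational polytope, pick finite monoid generators $\chi_1, \ldots, \chi_k$ of $\sigma^\vee \cap M$, and form the compact zonotope $\Pi = \sum_i [0,1]\chi_i$. Every element of $\sigma^\vee$ splits as $\pi + n$ with $\pi \in \Pi$ and $n$ a nonnegative integer combination of the $\chi_i$, so every $p \in P$ writes as $p = (v + \pi) + n$ with $v + \pi \in V + \Pi$ compact and $n \in \sigma^\vee \cap M$. When $p \in M$ this forces $v + \pi = p - n$ into the finite set $T := (V + \Pi) \cap M$, which is contained in $P \cap M$ since $V + \Pi \subset V + \sigma^\vee = P$; hence $P \cap M = T + (\sigma^\vee \cap M)$, which is the desired finite generation.

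The one real subtlety is the case where $\sigma^\vee$ has a nontrivial lineality space $L = \sigma^\perp$ (i.e.\ $\sigma$ is pointed but not full-dimensional), in which case $V$ fails to be bounded and the compactness step collapses. I would handle this by passing to the quotient lattice $\bar M := M/(M \cap L)$, where the image of $\sigma^\vee$ is pointed and the image of $P$ retains the corresponding recession cone; the pointed-case result in $\bar M$ then lifts back to $M$ because $L \cap M \subset \sigma^\vee \cap M$ (the lineality of a cone lies inside the cone), so any ambiguity in lifting by a lineality lattice element is absorbed by the $\sigma^\vee \cap M$-action.
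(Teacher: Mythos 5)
Your proof is correct and follows essentially the same Gordon's-lemma compactness strategy as the paper, though with different packaging in two places. First, you handle the non-pointed case by passing to the quotient lattice $M/(M\cap\sigma^\perp)$ and lifting, whereas the paper splits $\sfR[\sigma^\vee\cap M]$ as a tensor product of a Laurent ring and a pointed monoid algebra; these are two ways of saying the same thing. Second, in the pointed case the paper chooses real generators $u_1,\ldots,u_s$ of $\sigma^\vee$, forms the box $K=\{\sum t_iu_i:0\le t_i\le 1\}$, and shows $K\cap(x+M)\cap(\sigma^\vee)^\circ$ generates; you instead recast the basis set as $P\cap M$ for the rational polyhedron $P=\{y:\langle y,v_i\rangle\ge c_i\}$, invoke the Motzkin decomposition $P=V+\sigma^\vee$, and take $(V+\Pi)\cap M$ with $\Pi$ a zonotope on monoid generators. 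Your version is a bit more careful on one point: because you package the strict interior condition into the integer constants $c_i$ defining $P$ and observe up front that the ``else $0$'' clause never triggers, you avoid the small awkwardness in the paper's sketch where the remainder $\sum t_iu_i$ (with $t_i=\{a_i\}$) can land on the boundary of $\sigma^\vee$ rather than in $(\sigma^\vee)^\circ$; the paper's argument needs a minor adjustment of the fractional parts (or a slightly larger $K$) to repair this, whereas your polyhedron $P$ absorbs it automatically. Both proofs are morally Gordon's lemma, and neither is more general than the other.
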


\begin{proof}
The proof is similar to the proof of Gordon's lemma given in \cite[p. 12]{Fu}.
  Since $\sfR[\sigma^\vee \cap M]$ is a tensor product of $\sfR[\sigma^\perp \cap M]$ and $\sfR[\sigma^\vee/\sigma^\perp \cap M/(\sigma^\perp \cap M)]$, we may reduce to the case where $\sigma^\vee$ is strongly convex (i.e. to the case where $\sigma \subset N_\bR$ is full-dimensional).  Let $u_1,\ldots,u_s$ be vectors in $M$ whose $\bR_{\geq 0}$-span is all of $\sigma^\vee$, and let $K = \{\sum t_i u_i \mid 0 \leq t_i \leq 1\}$.  Since $K$ is compact and $(x+ M) \cap (\sigma^\vee)^\circ$ is discrete, the intersection $K \cap (x + M) \cap (\sigma^\vee)^\circ$ is finite.  We claim that this finite set generates the module.  To see this, note that we can write any $u \in (x+ M) \cap (\sigma^\vee)^\circ$ as $u = \sum m_i u_i + \sum t_i u_i$ with $m_i \in \bZ$ and $t_i \in [0,1)$.  Putting $\chi = \sum m_i u_i$, we have $[u] = \chi \cdot [\sum t_i u_i]$.
\end{proof}

For every vector bundle $\cE$ on $X$, the vector space $\Hom(\cE,\oTheta'(\sigma))$ is a module over the ring $\text{End}(\oTheta'(\sigma),\oTheta'(\sigma)) \cong \sfR[M \cap \sigma^\vee]$.  This module is naturally identified with the $\sfR[M \cap \sigma^\vee]$-module associated to the vector bundle $\cE^\vee\vert_{X_\sigma}$ on the affine variety $X_\sigma$.  Then $\cE^\vee$ is quasi-isomorphic to \v Cech complex
$$\bigoplus_{i_0} j_{C_{i_0}*}\Hom(\cE,\oTheta'(C_{i_0})) \to \bigoplus_{i_0 < i_1} j_{C_{i_0} \cap C_{i_1}*} \Hom(\cE,\oTheta'(C_{i_0} \cap C_{i_1}) \to \cdots$$
where $C_1,\ldots,C_v$ is some ordering of the maximal cones in $\Sigma$.

This motivates the following definition:

\begin{definition}
For $F \in \Sh(M_\bR/M)$, let $\rho(F)$ be the total complex of the bicomplex
$$\bigoplus_{i_0} j_{C_{i_0}*}\dghom(F,\oTheta(C_{i_0})) \to \bigoplus_{i_0 < i_1} j_{C_{i_0} \cap C_{i_1}*} \dghom(F,\oTheta(C_{i_0} \cap C_{i_1})) \to \cdots$$
where each $\dghom(F,\oTheta(\sigma))$ is regarded as a complex of quasicoherent sheaf on $X_\sigma = \Spec(\text{End}(\oTheta(\sigma)))$, pushed forward along the inclusion $X_\sigma \hookrightarrow X$.
\end{definition}

\begin{remark}
\label{rem:conv1}
It might be more natural to consider the covariant functor which sends $F$ to the quasicoherent sheaf glued together from the $\text{End}(\oTheta(\sigma))$-modules given by $\hom(\delta,F \star \oTheta(\sigma))$.  When $F$ is constructible, this construction coincides with $\rho(-\cD(F))$.  The construction we have given avoids the convolution product $\star$.
\end{remark}

\begin{theorem}
\begin{enumerate}
\item
We have $\rho(\kappa(\cE)) \cong \cE^\vee$ for all perfect complexes $\cE$.

\item
If $F \in \Sh_c(M_\bR/M)$, then $\rho(F)$ is quasi-isomorphic to a bounded complex of coherent sheaves.
\end{enumerate}
\end{theorem}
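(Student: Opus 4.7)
The plan is to use the \v{C}ech-complex definition of $\rho$ term by term: for part (1), to recognize $\rho(\okappa(\cE))$ as a standard affine \v{C}ech resolution of $\cE^\vee$; for part (2), to reduce each term of the defining bicomplex to a finitely generated module via lemma \ref{lem:fingen}.

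For part (1), since both $\rho$ and $(-)^\vee$ are exact, and a perfect complex is a bounded complex of vector bundles, it is enough to treat the case that $\cE$ is a vector bundle. For each intersection $\sigma = C_{i_0} \cap \cdots \cap C_{i_k}$, the fact that $\okappa$ restricts to a quasi-equivalence $\lorp \stackrel{\sim}{\to} \lor$ (theorem \ref{thm:square}) together with $\okappa(\oTheta'(\sigma)) = \oTheta(\sigma)$ yields a quasi-isomorphism of dg-modules
$$
\dghom(\okappa(\cE), \oTheta(\sigma)) \simeq \dghom(\cE, \oTheta'(\sigma))
$$
over $\sfR[\sigma^\vee \cap M]$. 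These identifications are natural in $\sigma$: the \v{C}ech differentials on either side arise from the morphisms $\oTheta(\tau) \to \oTheta(\sigma)$ and $\oTheta'(\tau) \to \oTheta'(\sigma)$ in $\CSi_\sfR$, which correspond under $\okappa$. By the observation just before the definition of $\rho$, the right-hand dg-module is concentrated in degree $0$ (higher $\Ext$'s vanish because $X_\sigma$ is affine) and equals $\Gamma(X_\sigma, \cE^\vee|_{X_\sigma})$. Hence $\rho(\okappa(\cE))$ coincides with the sheafified alternating \v{C}ech resolution of $\cE^\vee$ for the affine open cover $\{X_{C_i}\}_{i=1}^v$, and is therefore quasi-isomorphic to $\cE^\vee$.

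For part (2), since the defining bicomplex has only finitely many columns, it suffices to show that for each cone $\sigma$, $\dghom(F, \oTheta(\sigma))$ is quasi-isomorphic to a bounded complex of finitely generated $\sfR[\sigma^\vee \cap M]$-modules. Two facts do the work: the stalk of $\oTheta(\sigma) = p_! \Theta(\sigma, 0)$ at $\bar x$ is precisely the module $\sfR[(x+M) \cap (\sigma^\vee)^\circ]$ (up to shift) considered in lemma \ref{lem:fingen}, and so is finitely generated; and $\sfR[\sigma^\vee \cap M]$ is noetherian, its generators furnished by the classical Gordon lemma. Choose a finite semi-algebraic stratification $\cS$ of the compact manifold $M_\bR/M$ that refines both the stratum structure making $F$ constructible and the one on which $\oTheta(\sigma)$ is locally constant. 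Using the stratum filtration, reduce to $F$ of the form $j_{S!} L$ for $S \in \cS$ and $L$ a bounded complex of finite-rank local systems; then
$$
\dghom(j_{S!} L, \oTheta(\sigma)) \simeq R\Gamma\bigl(S, \uhom(L, \oTheta(\sigma)|_S)\bigr)
$$
is computed by a finite cellular model of $S$ with coefficients in a local system of finitely generated $\sfR[\sigma^\vee \cap M]$-modules, yielding a bounded complex of finitely generated modules over the noetherian ring $\sfR[\sigma^\vee \cap M]$.

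The main obstacle---and the point where most of the care is needed---is ensuring that the intrinsic $\sfR[\sigma^\vee \cap M]$-action on $\dghom(F, \oTheta(\sigma))$, which arises from $\mathrm{End}(\oTheta(\sigma))$ as in proposition \ref{prop:33}, agrees with the pointwise translation action used when applying lemma \ref{lem:fingen} to stalks. This is a naturality check: $\chi \in \sfR[\sigma^\vee \cap M]$ acts via the canonical inclusion $\Theta(\sigma, \chi) \hookrightarrow \Theta(\sigma, 0)$, which on each stalk of $\oTheta(\sigma)$ at $\bar x$ translates the basis vectors $[x + \xi]$ by $\chi$---exactly the action considered in lemma \ref{lem:fingen}. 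Given this compatibility, the stratified analysis above goes through.
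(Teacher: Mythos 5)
Your proof is essentially correct and the overall strategy for both parts agrees with the paper's. For part (1), the paper likewise invokes the \v Cech resolution together with Proposition~\ref{prop:33}, so the arguments are the same in substance. For part (2), the paper makes a slicker reduction: it reduces to costandard sheaves on \emph{open stars of simplices} in a triangulation, and then by proper base change $\dghom(F,\oTheta(\sigma))$ is literally a single stalk $\oTheta(\sigma)_x \cong \sfR[(\sigma^\vee)^\circ \cap p^{-1}(x)]$, which is directly the module of Lemma~\ref{lem:fingen}. You instead reduce to extensions-by-zero $j_{S!}L$ off arbitrary strata, and compute via a finite cellular model; this works but requires more machinery (noetherianity of $\sfR[\sigma^\vee\cap M]$, cellular chains over a local system of finitely generated modules) than the paper's one-line reduction buys you. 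Your explicit compatibility check between the $\mathrm{End}(\oTheta(\sigma))$-action and the pointwise translation action of Lemma~\ref{lem:fingen} is a genuine detail that the paper treats as implicit; it is a welcome addition.

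One small imprecision worth fixing: in the displayed adjunction formula for $\dghom(j_{S!}L,\oTheta(\sigma))$, the adjoint to extension-by-zero along a locally closed inclusion $j_S$ is $j_S^!$, not restriction $j_S^*$; so $\oTheta(\sigma)|_S$ should be read as $j_S^!\oTheta(\sigma)$. Since $\oTheta(\sigma)$ is constructible with respect to the chosen stratification, $j_S^!\oTheta(\sigma)$ is still a shifted local system on $S$ and the finiteness conclusion is unaffected, but as written the formula is not correct. Also note that the paper's phrase ``perfect complex of $\mathrm{End}(\oTheta(\sigma))$-modules'' is stronger than what is actually proved or needed; your weaker phrasing (``bounded complex of finitely generated modules'') matches what the argument establishes and what is required for coherence of $\rho(F)$.
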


\begin{proof}
The first assertion follows from the existence of a \v Cech resolution and from proposition \ref{prop:33}.  To prove the second assertion, we can make two reductions:
\begin{itemize}
\item It suffices to show that $\hom(F,\oTheta(\sigma))$ is a perfect complex of $\mathrm{End}(\oTheta(\sigma))$-modules for each $\sigma$.
\item We may furthermore assume that $F$ is a costandard sheaf on the star of a simplex in some triangulation of $M_\bR/M$, and we may take this triangulation to be as fine as we wish.
\end{itemize}
When $F$ is such a costandard sheaf, the space $\hom(F,\oTheta(\sigma))$ is isomorphic to the stalk of $\oTheta(\sigma)$ at some point $x \in M_\bR/M$.  We therefore have to show that $\oTheta(\sigma)_x$ is a finitely-generated $\text{End}(\oTheta(\sigma))$-module.  By the proper base change theorem, we have $\oTheta(\sigma)_x \cong \bigoplus_{y \in M_\bR \mid p(y) = x} \Theta(\sigma,0)_y = \sfR[(\sigma^\vee)^\circ \cap p^{-1}(x)]$.  This is the module considered in lemma \ref{lem:fingen}, which completes the proof.
\end{proof}

\begin{corollary}
\label{cor:conjecture}
For a smooth, complete toric variety $X$ with associated fan $\Sigma$, the following are equivalent:

\begin{enumerate}
\item $\okappa:\Perf(X) \to \Sh_c(M_\bR/M;\LS)$ is a quasi-equivalence.
\item If $F \in \Sh_c(M_\bR/M;\tLS)$ and $\rho(F) = 0$, then $F = 0$.
\item If $F \in \Sh_c(M_\bR/M;\tLS)$ and $\dghom(F,\oTheta(\sigma)) = 0$ for all $\sigma$, then $F = 0$.
\end{enumerate}
\end{corollary}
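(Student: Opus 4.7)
I would prove the cycle $(1)\Rightarrow(2)\Rightarrow(3)\Rightarrow(1)$; the first two implications are formal consequences of the results already in hand, and the third carries the substantive content.

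For $(1)\Rightarrow(2)$: if $\okappa$ is a quasi-equivalence, write $F=\okappa(\cE)$. The duality theorem preceding the corollary gives $\rho(F)\cong\cE^\vee$; since $X$ is smooth, $(\cdot)^\vee$ is an involution on $\Perf(X)$, so $\rho(F)=0$ forces $\cE=0$ and hence $F=0$. For $(2)\Rightarrow(3)$: if $\dghom(F,\oTheta(\sigma))=0$ for every $\sigma$, then each term of the \v{C}ech bicomplex defining $\rho(F)$ vanishes, so $\rho(F)=0$, and $(2)$ supplies $F=0$.

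For $(3)\Rightarrow(1)$, we need essential surjectivity of the (already fully faithful) $\okappa$. Given $F\in\Sh_c(M_\bR/M;\tLS)$, smoothness of $X$ gives $\rho(F)\in D^b_{\mathrm{coh}}(X)=\Perf(X)$; set $\cE:=\rho(F)^\vee\in\Perf(X)$ and $G:=\okappa(\cE)$, so by the duality theorem $\rho(G)\cong\cE^\vee\cong\rho(F)$. I would then construct a natural morphism $\alpha_F:G\to F$ whose image under $\rho$ is the canonical identification $\rho(G)\simeq\rho(F)$. Granting this, the cone $C$ of $\alpha_F$ satisfies $\rho(C)=0$. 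To invoke $(3)$, restrict to the affine chart $X_\sigma$: the induced cover $\{X_{C_i}\cap X_\sigma\}=\{X_{C_i\cap\sigma}\}$ of $X_\sigma$ contains $X_\sigma$ itself as a member whenever $\sigma\subset C_i$ (and at least one such $C_i$ exists since $\sigma$ is a cone in the fan), so the restricted \v{C}ech complex collapses to its value on this distinguished open, giving
$$\Gamma(X_\sigma,\rho(C))\;\simeq\;\dghom(C,\oTheta(\sigma))\;=\;0$$
for every cone $\sigma$. Hypothesis $(3)$ then forces $C=0$, so $\alpha_F$ is a quasi-isomorphism and $F\cong G\in\okappa(\Perf(X))$.

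The main obstacle is the construction of $\alpha_F$. The cleanest approach would exhibit $(\cdot)^\vee\circ\rho$ as a right adjoint to $\okappa$ on $\Sh_c(M_\bR/M;\tLS)$; the counit of this adjunction is $\alpha_F$, and it automatically reduces to the canonical iso on objects already in $\okappa(\Perf(X))$ because $\okappa$ is fully faithful. Reducing to generators, the adjunction identity boils down to a natural quasi-isomorphism
$$\dghom(\oTheta(\sigma),F)\;\cong\;\bigl(\dghom(F,\oTheta(\sigma))\bigr)^\vee$$
of complexes of $\mathrm{End}(\oTheta(\sigma))=\sfR[\sigma^\vee\cap M]$-modules for each cone $\sigma$, a microlocal Verdier-style duality on the torus mirroring the coherent-side identity $\Gamma(X_\sigma,\rho(F)^\vee)\cong\Gamma(X_\sigma,\rho(F))^\vee$. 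Establishing this pairing naturally in $F$, without circularly invoking essential surjectivity, is the technical step not furnished by Section \ref{sec:two}, and it is precisely the sort of duality input needed to replace the inductive argument of the equivariant case flagged at the opening of Section \ref{sec:three}.
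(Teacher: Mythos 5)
Your $(1)\Rightarrow(2)$ step matches the paper exactly (including the implicit reliance on $(\cdot)^\vee$ being an involution on $\Perf(X)$, which requires smoothness), and the \v{C}ech-restriction observation you use in $(3)\Rightarrow(1)$---that restricting the \v{C}ech bicomplex to $X_\sigma$ collapses it onto $\dghom(F,\oTheta(\sigma))$ because the cover $\{X_{C_i\cap\sigma}\}$ contains $X_\sigma$ itself---is precisely what the paper invokes for its $(2)\Leftrightarrow(3)$ equivalence. Organizing the argument as a cycle $(1)\Rightarrow(2)\Rightarrow(3)\Rightarrow(1)$ rather than two biconditionals is a cosmetic difference. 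Where you genuinely diverge is in the treatment of essential surjectivity: the paper's proof of $(2)\Rightarrow(1)$ consists of the single sentence that ``if (2) holds then $\okappa$ is essentially surjective,'' offering no construction of a comparison map and no adjunction.

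You are right to flag this as the substantive issue. Conservativity of $\rho$ together with the retraction identity $\rho\circ\okappa\cong(\cdot)^\vee$ does \emph{not} formally imply essential surjectivity of a fully faithful $\okappa$; one needs a natural morphism $\alpha_F:\okappa(\rho(F)^\vee)\to F$ (equivalently, an adjunction between $\okappa$ and $(\cdot)^\vee\circ\rho$ on $\Sh_c(M_\bR/M;\tLS)$) so that the hypothesis can be applied to the cone. The paper neither constructs $\alpha_F$ nor establishes the adjunction, so your proposal is not missing a step that the paper supplies---you have instead made explicit a gap that the paper's terse assertion glosses over. Your sketch of how such an adjunction would reduce to a duality pairing $\dghom(\oTheta(\sigma),F)\cong\dghom(F,\oTheta(\sigma))^\vee$ is the right shape of input (and is consonant with Remark~\ref{rem:conv1}), but as you acknowledge, neither you nor the paper actually proves it. In short: your proposal tracks the paper where the paper gives an argument, and is more candid than the paper about the one place where it does not.
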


\begin{proof}
If $\okappa$ is a quasi-equivalence, then $F = \okappa(\cE)$ for some perfect complex $\cE$, and we have $\rho(F) = \cE^\vee$.  Thus $\rho(F) = 0$ implies that $F = \kappa(\cE) = 0$.  This shows that (1) implies (2).  Conversely, if (2) holds then $\okappa$ is essentially surjective, and since we have already shown it to be fully faithful (1) holds as well.  To show that (2) is equivalent to (3), note that $\rho(F)$ vanishes if and only if  $\rho(F)\vert_{X_\sigma}$ vanishes for all $\sigma$.  But by definition $\rho(F)\vert_{X_\sigma}$ is the quasicoherent sheaf associated to the complex of $\text{End}(\oTheta(\sigma))$-modules $\dghom(F,\oTheta(\sigma))$.

\end{proof}

\begin{remark}
If the variant of $\rho$ described in remark \ref{rem:conv1} can be shown to respect the monoidal structure (i.e. to intertwine the convolution and tensor products) then the smoothness hypothesis of corollary \ref{cor:conjecture} could be removed.  Indeed, the objects of $\Sh_c(M_\bR/M;\tLS)$ are strongly dualizable (see \cite[Proposition 7.3]{fltz}), so in that case their images under $\rho$ would also be strongly dualizable and therefore perfect.  
\end{remark}

\begin{remark}
The statement (3) of the corollary is similar to, but not obviously stronger or weaker than, conjecture 5.8 of \cite{fltz}.
\end{remark}

\section{Toric varieties from unimodular zonotopes}
\label{sec:zonotopal}

In this section we describe a class of toric varieties for which the conditions of corollary \ref{cor:conjecture} hold.  Let $Y$ be a manifold and let $\Lambda \subset T^* Y$ be a conical Lagrangian.  One of the things that makes working with $\Sh_c(Y;\Lambda)$ difficult is the lack of a good collection of generators for the category.  However there are certain kinds of conical Lagrangians for which some generators can be found.  Here we classify those $\Sigma$ for which $\Lambda_\Sigma$ is the conormal variety to a Whitney stratification of $M_\bR/M$, in which case the costandard sheaves on strata generate.  

\begin{remark}
Already in \cite{bondal}, Bondal found a more general class of fans $\Sigma$ for which $\Perf(X)$ is equivalent  to the category of sheaves that are constructible with respect to a non-Whitney stratification of $M_\bR/M$.  It is possible to find generators of $\Sh_c(M_\bR/M;\tLS)$ for these examples as well (in fact they are described in \cite{bondal}), but here we consider only the Whitney case.
\end{remark}

\begin{definition}
A projective toric variety $X$ is called \emph{zonotopal} if either of the following equivalent conditions hold:
\begin{enumerate}
\item Let $\Delta \subset M_\bR$ denote the moment polytope of $X$ with respect to some polarization.  Every face of $\Delta$ is symmetric about its barycenter.
\item Let $\Sigma \subset N_\bR$ denote the fan associated to $X$.  Then $\Sigma$ is exactly the fan of chambers associated to a hyperplane arrangement in $N_\bR$.
\end{enumerate}
(See \cite{zono} for a proof that these conditions are indeed equivalent.)  A zonotopal toric variety of dimension $n$ is called \emph{unimodular} if any $n$ linearly independent vectors chosen from the generators of the rays of $\Sigma$ form a $\bZ$-basis for $M$.
\end{definition}

For example, both $\bP^1 \times \bP^1$, and the toric variety obtained from $\bP^2$ by blowing up the three torus-fixed points are zonotopal and unimodular.  The toric variety obtained from $\bP^1 \times \bP^1$ by blowing up the four torus-fixed points is zonotopal, but not unimodular.  Unimodular toric varieties are always smooth.

Let $\cH$ be the arrangement of affine hyperplanes in $M_\bR$ that pass through a lattice point and that are orthogonal to one of the rays in $\Sigma$.  This arrangement is periodic---i.e. invariant under translation by lattice points---and induces a periodic polyhedral cell decomposition of $M_\bR$:

\begin{definition}
Let $\cH$ be a periodic hyperplane arrangement in $M_\bR$.  Define a filtration $\cH^{(i)}$ of $M_\bR$ as follows:
\begin{enumerate}
\item $\cH^{(0)} = M_\bR$
\item $\xi \in \cH^{(i)}$ if and only if $\xi$ belongs to the intersection of at least $i$ distinct hyperplanes in the arrangement, and these hyperplanes can moreover be chosen to have normal crossings.
\end{enumerate}
This filtration is a Whitney stratification of $M_\bR$; we will denote this stratification by $\cH$.
\end{definition}

\begin{theorem}
Let $X$ be a smooth projective zonotopal toric variety, with fan $\Sigma \subset N_\bR$. Let $\cH$ be the corresponding stratification of $M_\bR$.  If $X$ is unimodular, then the characteristic variety $\Lambda_\cH \subset T^*M_\bR = M_\bR \times N_\bR$ of the stratification $\cH$ is exactly $\LS$:
$$\Lambda_\cH = \LS := \bigcup_{\chi \in M} \bigcup_{\sigma \in \Sigma} (\chi + \sigma^\perp) \times (-\sigma)$$
\end{theorem}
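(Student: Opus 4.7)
The plan is to establish $\Lambda_\cH = \LS$ by matching the two sets fiber by fiber above each $x \in M_\bR$. For this, introduce $R_x := \{\rho \text{ a ray of } \Sigma : \langle x, \rho\rangle \in \bZ\}$, the set of rays whose associated affine hyperplane of $\cH$ passes through $x$; note $R_x$ is closed under $\rho \mapsto -\rho$. The first use of unimodularity is a \emph{saturation} property: any ray of $\Sigma$ lying in $V := \mathrm{span}_\bR R_x \subset N_\bR$ already belongs to $R_x$. Indeed, a maximal $\bR$-independent subset of $R_x$ is a partial $\bZ$-basis of $N$ by unimodularity, and the primitive lattice vectors in $V$ are integer combinations of it, hence pair integrally with $x$.

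With this in hand I would describe both fibers. For $\Lambda_\cH$: under unimodularity, linear independence of rays corresponds precisely to normal crossings of the associated hyperplanes, so $\cH^{(i)} = \{y : \dim\mathrm{span}\,R_y \ge i\}$, and the stratum through $x$ is locally $\{y : R_y = R_x\}$; its conormal at $x$ is $V$, so the fiber of $\Lambda_\cH$ over $x$ is $\{x\}\times V$. For $\LS$: the condition $x \in \chi + \sigma^\perp$ for some $\chi \in M$ unpacks to $\langle x, \rho\rangle \in \bZ$ for every ray $\rho$ of $\sigma$, i.e., all rays of $\sigma$ lie in $R_x$ (the reverse direction uses unimodularity to solve the integer system for $\chi$). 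Hence the fiber of $\LS$ over $x$ is $\{x\}\times \bigcup\{-\sigma : \text{rays of } \sigma \subset R_x\}$. Matching the two fibers reduces to the set-theoretic identity $V = \bigcup\{-\sigma : \text{rays of } \sigma \subset R_x\}$, of which the $\supset$ direction is immediate.

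For the reverse inclusion, given $\eta \in V$ let $\sigma$ be the unique cone of $\Sigma$ whose relative interior contains $-\eta$ (using completeness of $\Sigma$); one needs $\sigma \subset V$, equivalently that all rays of $\sigma$ lie in $R_x$. This amounts to the statement that $V$ is a union of cones of $\Sigma$, equivalently that $V$ is a flat of the central hyperplane arrangement $\cA$ whose chamber fan is $\Sigma$. I expect this flat identification to be the main obstacle, and this is where smoothness and unimodularity must cooperate. Simpliciality (smoothness) is used through the fact that every flat of $\cA$ is the linear span of the 1D flats it contains, reducing the problem to showing that every 1D flat in the smallest flat $F \supset V$ already lies in $V$; unimodularity restricts which primitive lattice directions can populate $F$ and is genuinely necessary, as the failure of the identity in simplicial-but-not-unimodular examples (e.g., the $A_3$ braid arrangement with its root lattice) shows. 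The cleanest statement of what is needed is that the matroid on the rays of $\Sigma$ is regular, so that its matroid-closed subsets correspond bijectively to the flats of $\cA$; this matroid/arrangement translation is the step I anticipate doing most of the work.
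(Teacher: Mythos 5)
Your proposal follows essentially the same line as the paper: work fiber by fiber over $\xi \in M_\bR$, compute $\Lambda_{\cH,\xi}$ as the conormal of the stratum (the span $V$ of the rays $R_\xi$ on which $\xi$ pairs integrally), compute $\Lambda_{\Sigma,\xi}$ as the union of $-\sigma$ over cones whose rays lie in $R_\xi$, and use unimodularity to get the integrality. Your ``saturation'' lemma is exactly the role unimodularity plays in the paper's last three sentences: the rays $\{x_1,\dots,x_i\}$ form a $\bZ$-basis of $(\ell_1+\cdots+\ell_i)\cap N$, so that $\xi$ automatically pairs integrally with anything in the span. One small overstatement: the equivalence ``normal crossings of the $H_j$ $\Leftrightarrow$ linear independence of the $\rho_j$'' is automatic (the $\rho_j$ are the conormals of the $H_j$), not a consequence of unimodularity; this does no harm.

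Where you and the paper part ways is in how much justification is given for the step that $V=\mathrm{span}(R_\xi)$ is a union of cones of $\Sigma$ (equivalently a flat of the defining central arrangement $\cA$). The paper dispatches this in one sentence --- ``By the hypotheses on the fan, this linear space necessarily contains a cone $\sigma$ with $x \in -\sigma$'' --- while you explicitly flag it as the crux and sketch a matroid-regularity route without carrying it out. You are right that this is where the content is: your $A_3$ example is a genuine counterexample when unimodularity is dropped (for a suitable $\xi$, $R_\xi$ consists of two ``double-transposition'' lines whose span is a non-flat $2$-plane), and one can also check that simpliciality of $\cA$ is really needed for this step, not just unimodularity of the rays (e.g., the chamber fan of $\{x_1=0,\,x_2=0,\,x_3=0,\,x_1+x_2+x_3=0\}$ has unimodular rays but non-simplicial chambers, and there $V$ can again fail to be a flat). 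So both the smoothness and unimodularity hypotheses feed into the flat claim, not just into the integrality at the end. As a comparison of approaches: you and the paper are doing the same thing; you have correctly isolated the one nontrivial geometric lemma and have been honest about not proving it, whereas the paper asserts it and moves on. To call your proposal complete you would need to actually establish that $V$ is a flat --- the matroid-regularity translation you gesture at is a reasonable route, but it is not yet a proof.
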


\begin{proof}
From the results of \cite{fltz}, it is clear that $\LS \subset \Lambda_{\cH}$ for any fan $\Sigma$, since every sheaf in $\Sh_{cc}(M_\bR;\LS)$ is constructible with respect to the stratification $\cH$.  We will show that $\Lambda_\cH \subset \LS$.

For $\xi \in M_\bR$, set $\Lambda_{\cH,\xi} = \Lambda_{\cH} \cap \{\xi\} \times N_\bR$ and $\Lambda_{\Sigma,\xi} = \LS \cap \{\xi\} \times N_\bR$.  To prove the theorem it suffices to show that $\Lambda_{\cH,\xi} = \Lambda_{\Sigma,\xi}$ for all $\xi \in M_\bR$.  Suppose that $\xi \in \cH^{(i)} - \cH^{(i+1)}$, so that there exist hyperplanes $H_1,\ldots,H_i$ such that locally near $\xi$ the stratum containing $\xi$ coincides with $\xi + H_1 \cap \cdots \cap H_i$.  (The definition also requires that the affine hyperplanes $\xi + H_j$ coincide with affine hyperplanes of the form $\chi_j + H_j$, where $\chi_j$ is a lattice point.)  Then, by definition, $x$ belongs to $\Lambda_{\cH,\xi}$ if and only if $\langle \lambda,x\rangle = 0$ for all $\lambda \in H_1 \cap \cdots \cap H_i$.  On the other hand, $x$ belongs to $\Lambda_{\Sigma,\xi}$ if and only if the following holds: there exists a cone $\sigma$ such that $\langle \xi,-\rangle$ takes integer values on $\sigma \cap M$ and such that $-x \in \sigma$.

Each of the hyperplanes $H_i$ is perpendicular to one of the lines $\ell_i$ in $\Sigma$.  (This line is a union of a ray in $\Sigma$ and its antipode, which is also a ray.)  Suppose that $x$ belongs to $\Lambda_{\cH,\xi}$, so that $x$ is perpendicular to the intersection $H_1 \cap \cdots \cap H_i$.  Then $x$ belongs to the sum $\ell_1 + \cdots + \ell_i$, which is a linear space.  By the hypotheses on the fan, this linear space necessarily contains a cone $\sigma$ with $x \in -\sigma$.  To show that $\Lambda_{\cH,\xi} \subset \Lambda_{\Sigma,\xi}$, it remains to show that $\xi$ takes integer values on $\sigma \cap M$, or equivalently on $(\ell_1 + \cdots + \ell_i) \cap M$.  Let $x_j$ be a generator for $\ell_j \cap M \cong \bZ$.  Since $X$ is unimodular, $\{x_1,\ldots,x_i\}$ forms a $\bZ$-basis for $(\ell_1 + \cdots + \ell_i) \cap M$, and $\langle \xi, x_j\rangle = \langle \chi_j,x_j\rangle \in \bZ$ for each $j$.  This completes the proof.

\end{proof}

\begin{corollary}
If $X$ is a smooth projective zonotopal toric variety, then
$$\okappa:\Perf(X) \to \Sh_c(M_\bR/M,\tLS)$$
is a quasi-equivalence of dg categories.
\end{corollary}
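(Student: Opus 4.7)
The plan is to combine the preceding theorem with Corollary~\ref{cor:conjecture}. Granted that smoothness of a projective zonotopal toric variety is equivalent to unimodularity of its fan, the theorem identifies $\LS$ with the characteristic variety $\Lambda_\cH$ of the Whitney stratification $\cH$ of $M_\bR$ by the hyperplanes orthogonal to the rays of $\Sigma$. Quotienting by $M$, the set $\tLS$ is the conormal variety of the induced Whitney stratification $\overline{\cH}$ on $\bT = M_\bR/M$, so $\Sh_c(M_\bR/M;\tLS)$ is the full bounded derived category of $\overline{\cH}$-constructible sheaves on $\bT$. Since $\okappa$ has already been shown to be fully faithful and to take values in $\Sh_c(M_\bR/M;\tLS)$, only essential surjectivity remains.

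To establish essential surjectivity, I would verify condition~(3) of Corollary~\ref{cor:conjecture}: if $F \in \Sh_c(M_\bR/M;\tLS)$ satisfies $\dghom(F,\oTheta(\sigma))=0$ for every $\sigma \in \Sigma$, then $F=0$. Because $\overline{\cH}$ is a Whitney stratification, standard constructible sheaf theory supplies a finite generating set for this triangulated category, namely the standard sheaves $j_{S!}\sfR_S$ (or dually the costandard sheaves) as $S$ ranges over strata of $\overline{\cH}$. It therefore suffices to exhibit each such generator as an object of $\lor$.

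For this step, unimodularity provides a clean bijection between strata of $\overline{\cH}$ and equivalence classes of cone-coset pairs $(\sigma,\chi) \in \bGamma(\Sigma,M)/M$: a relatively open face of $\cH$ in $M_\bR$ is of the form $\chi + F_\sigma$, where $F_\sigma$ is the relative interior of a face of $\sigma^\vee$ determined by some $\sigma \in \Sigma$. Using this matching, I would realize each generating standard sheaf on a stratum as a finite iterated cone of the objects $\oTheta(\sigma')$, organized by a \v{C}ech-style resolution over the closed star of the corresponding face of $\cH$---in direct analogy with the \v{C}ech construction used in Section~\ref{subsec:perfect} to embed $\Perf(X)$ into $\lorp$. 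This places every generator of $\Sh_c(M_\bR/M;\tLS)$ inside $\lor$ and completes the argument.

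The main obstacle is carrying out this explicit \v{C}ech-style realization. Each individual $\oTheta(\sigma')$ has infinite-dimensional stalks from summing over lattice translates under $p_!$, and one must verify that the alternating sums in the \v{C}ech differentials cancel out all but the finite-dimensional contribution matching $j_{S!}\sfR_S$. Unimodularity is what allows this cancellation: it ensures that lattice translates of faces of $\cH$ are in clean bijection with the cosets indexing the pieces of $p_!\Theta(\sigma',\chi)$, so the combinatorics close up without any residual finite quotient obstructing the telescoping.
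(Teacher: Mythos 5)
Your proposal correctly identifies the setup (the preceding theorem makes $\tLS$ the characteristic variety of a Whitney stratification, so the category is generated by costandard sheaves on the contractible strata), but it then diverges from the paper's argument in a way that leaves a genuine gap, one you acknowledge yourself as ``the main obstacle.''

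The paper does not attempt to realize each costandard sheaf on $M_\bR/M$ directly as an iterated cone of the objects $\oTheta(\sigma')$ via a \v Cech-style resolution. That construction would essentially amount to reproving the essential surjectivity of the nonequivariant $\okappa$ from scratch, and the cancellation argument you gesture at (that the alternating sums in the \v Cech differentials wipe out the infinite-dimensional contributions of the $\oTheta(\sigma')$'s) is exactly the hard part; you have not carried it out, and it is not clear that the combinatorics ``close up'' without further work. Instead, the paper exploits the commutative square of Theorem~\ref{thm:square} together with the already-proved main theorem of \cite{fltz}: each stratum $S$ of $\cH/M$ in $M_\bR/M$ lifts homeomorphically to a stratum $\tilde S$ of $\cH$ in $M_\bR$, and $p_!(i_{\tilde S})_!\sfR \cong (i_S)_!\sfR$. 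Since the strata of $\cH$ in $M_\bR$ are also contractible, the costandard sheaves $(i_{\tilde S})_!\sfR$ generate $\Sh_{cc}(M_\bR;\LS)$, and they lie in the image of $\kappa$ because \cite{fltz} proves $\kappa$ is a quasi-equivalence onto $\Sh_{cc}(M_\bR;\LS)$. Pushing forward by $p_!$ and using the square then puts the generating costandard sheaves downstairs in the image of $\okappa$, finishing the proof with no new resolutions required.

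A secondary issue: invoking condition~(3) of Corollary~\ref{cor:conjecture} is circuitous here. If you can place all the generators of $\Sh_c(M_\bR/M;\tLS)$ inside $\lor$, that \emph{is} essential surjectivity, and condition~(3) becomes a consequence rather than an intermediate target. The detour adds nothing and obscures the logic. The real missing idea in your writeup is simply the reduction to the equivariant case via Theorem~\ref{thm:square}, which is what makes the argument short.
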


\begin{proof}
We have already shown that $\okappa$ is fully faithful.  We will show that it is essentially surjective by showing that is generated by certain costandard sheaves which are in the image of $\okappa$.

Since $\LS$ is the characteristic variety of the Whitney stratification $\cH$, we may identify $\Sh_{cc}(M_\bR;\LS)$ with $\Sh_{cc}(M_\bR;\cH)$---i.e. with the dg category of compactly-supported sheaves that are constructible with respect to $\cH$.  Similarly, we may identify $\Sh_c(M_\bR/M;\tLS)$ with the dg category of sheaves that are constructible with respect to $\cH$.   Since each stratum of $\cH$ and $\cH/M$ is contractible, it follows that $\Sh_{cc}(M_\bR;\LS)$ and $\Sh_c(M_\bR/M;\tLS)$ are generated by the costandard sheaves $i_! \sfR$, where $i$ denotes the inclusion of a stratum.  Since every stratum of $M_\bR/M$ is covered by a stratum of $M_\bR$, to show that costandard sheaves are in the image of $\okappa$ it suffices to show that costandard sheaves are in the image of $\kappa$.  But this is a consequence of the main theorem of \cite{fltz}.
\end{proof}

\end{document}